	\newcommand{\multialg}[1]{\mathcal{M}(#1)\xspace}
	\newcommand{\corona}[1]{\mathcal{Q}(#1)\xspace}
	\newcommand{\nuc}{\mathrm{nuc}}
	\theoremstyle{plain}
	\newtheorem{thm}{Theorem}[section]
	\newtheorem{lemma}[thm]{Lemma}
	\newtheorem{theorem}[thm]{Theorem}
	\newtheorem{proposition}[thm]{Proposition}
	\newtheorem{corollary}[thm]{Corollary}
	\theoremstyle{definition}
	\newtheorem{definition}[thm]{Definition}
	\newtheorem{remark}[thm]{Remark}
	\newtheorem{example}[thm]{Example}
	\numberwithin{equation}{section}
	\numberwithin{figure}{section}
\begin{document}
	\title{A note on non-unital absorbing extensions}
	\author{James Gabe}
        \address{Department of Mathematical Sciences \\
        University of Copenhagen\\
        Universitetsparken~5 \\
        DK-2100 Copenhagen, Denmark}
        \email{jamiegabe123@hotmail.com}
        
        \thanks{This work was supported by the Danish National Research Foundation through the Centre for Symmetry and Deformation (DNRF92).}
	%\date{\today}
	\subjclass[2000]{46L05, 46L35, 46L80}
	\keywords{Absorbing extensions, corona factorisation property, $KK$-theory, classification}

\begin{abstract}
Elliott and Kucerovsky stated that a non-unital extension of separable $C^\ast$-algebras with a stable ideal, is nuclearly absorbing if and only if the extension is purely large. However, their proof was flawed.
We give a counter example to their theorem as stated, but establish an equivalent formulation of nuclear absorption under a very mild additional assumption to being purely large. 
In particular, if the quotient algebra is non-unital, then we show that the original theorem applies. 
We also examine how this effects results in classification theory.
\end{abstract}

\maketitle

\section{Introduction and a counter example}

A (unital) extension of $C^\ast$-algebras $0 \to \mathfrak B \to \mathfrak E \to \mathfrak A \to 0$ is called \emph{(unitally) weakly nuclear} if there is a (unital) completely positive splitting $\sigma \colon \mathfrak A \to \mathfrak E$ which is weakly nuclear,
i.e.~for every $b\in \mathfrak B$ the map $b\sigma(-)b^\ast \colon \mathfrak A \to \mathfrak B$ is nuclear. Such an extension is called trivial if we may take the weakly nuclear splitting to be a $\ast$-homomorphism. 
An extension is called (unitally) nuclearly absorbing if it absorbs every trivial, (unitally) weakly nuclear extension, i.e.~the Cuntz sum of our given extension $\mathfrak e$ with any trivial, (unitally) weakly nuclear extension is strongly unitarily equivalent
to $\mathfrak e$. A remarkable result of Elliott and Kucerovsky \cite{ElliottKucerovsky-extensions} shows that a unital, separable extension with a stable ideal is unitally nuclearly absorbing if and only if the extension is \emph{purely large}.
Recall, that an extension 
$0 \to \mathfrak B \to \mathfrak E \to \mathfrak A \to 0$ of $C^\ast$-algebras with $\mathfrak B$ stable is called purely large if for any $x\in \mathfrak E \setminus \mathfrak B$, 
the hereditary $C^\ast$-subalgebra $\overline{x\mathfrak B x^\ast}$ of $\mathfrak B$ contains a stable, $\sigma$-unital $C^\ast$-subalgebra $\mathfrak D$ which is full in $\mathfrak B$. Note that we have added the requirement that $\mathfrak D$ be $\sigma$-unital,
since this was implicitly used in \cite[Lemma 7]{ElliottKucerovsky-extensions} and since this is automatic in the separable case, which is our main concern.

In their paper, Elliott and Kucerovsky use the unital version above to obtain a non-unital version of this result, i.e.~that a non-unital extension is nuclearly absorbing if and only if it is purely large. Unfortunately this is not true.
We will provide a counter example below.

A stable $C^\ast$-algebra is said to have the \emph{corona factorisation property} if all norm-full multiplier projections are Murray--von Neumann equivalent, or equivalently, all norm-full multiplier projections are properly infinite.
As is shown in \cite{KucerovskyNg-corona}, any full extension by a $\sigma$-unital, stable $C^\ast$-algebra with the corona factorisation property, is purely large in the sense of \cite{ElliottKucerovsky-extensions}. Here full means that the Busby map is full,
i.e.~that it maps non-zero elements to full elements in the corona algebra.

$C^\ast$-algebras which do not have the corona factorisation property have rather exotic properties, see e.g.~\cite{KucerovskyNg-Sregularity}. 
It follows by \cite[Corollary 1]{Robert-nucdim} that any $\sigma$-unital, stable $C^\ast$-algebra with finite nuclear dimension, or, more generally, nuclear dimension less than $\omega$,
has the corona factorisation property. Thus for classification purposes, the corona factorisation property is not really any restriction.

After receiving an early version of this note, Efren Ruiz constructed a counter example to \cite[Theorem 4.9]{EilersRestorffRuiz-K-theoryfullext}. 
In fact, by using results from this note, Ruiz has constructed two graphs such that the induced $C^\ast$-algebras have exactly one non-trivial ideal, have isomorphic six-term exact sequences in $K$-theory with order and scale, 
but for which the $C^\ast$-algebras are non-isomorphic. 
This implies that we do not have a complete classification of graph $C^\ast$-algebras with exactly one non-trivial ideal using the above $K$-theoretic invariant, as opposed to what was previously believed.
The counter example is provided in Section 4.
Fortunately, all recent classification results of \emph{stable} graph $C^\ast$-algebras are unaffected by the issues addressed in this note, and hence stand as given.

As for general notation in this note we let $\pi$ denote the quotient map from the multiplier algebra of some $C^\ast$-algebra to its corona algebra, and we consider an essential extension algebra as a $C^\ast$-subalgebra of the multiplier algebra of the ideal. 

A counter example of \cite[Corollary 16]{ElliottKucerovsky-extensions} could be as follows.

\begin{example}\label{e:counterexm}
 Let $\mathfrak A = \mathbb C$, $\mathfrak B = \mathbb K \oplus \mathbb K$, and consider the trivial extension $\mathfrak E$ with splitting $\sigma (1) = P \oplus 1 \in \multialg{\mathbb K} \oplus \multialg{\mathbb K} \cong \multialg{\mathfrak B}$,
 where $P$ is a full projection in $\multialg{\mathbb K}$ such that $1-P$ is also full.
 The extension $\mathfrak E$ is clearly full, and since $\mathfrak B$ has the corona factorisation property, this implies that $\mathfrak E$ is a non-unital, purely large extension. 
 However, it does not absorb the zero extension, i.e.~the extension with the zero Busby map. 
 This is easily seen by projecting to the second coordinate in the corona algebra $\pi_2 \colon \corona{\mathfrak B} \cong \corona{\mathbb K}\oplus \corona{\mathbb K} \to \corona{\mathbb K}$, since $\pi_2(\tau(1)) = 1$ and $\pi_2((\tau \oplus 0)(1))$ is a non-trivial projection, where $\tau$ denotes the Busby map.
\end{example}

The flaw in the original proof is the claim that a non-unital extension $\mathfrak E$ is purely large if and only if its unitisation $\mathfrak E^\dagger$ is purely large. The sufficiency is trivial but the necessity is incorrect.

\begin{lemma}
 There exists a non-unital purely large extension such that the unitisation is not purely large.
\end{lemma}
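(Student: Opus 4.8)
The plan is to show that the extension $\mathfrak E$ of Example~\ref{e:counterexm} is itself the required example. That $\mathfrak E$ is non-unital and purely large was already established there (fullness of the Busby map, together with the corona factorisation property of $\mathbb K\oplus\mathbb K$), so the only thing left to prove is that the unitised extension is \emph{not} purely large.

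First I would make the unitisation concrete. Writing $\mathfrak E = \mathfrak B + \mathbb C\sigma(1) \subseteq \multialg{\mathbb K}\oplus\multialg{\mathbb K} = \multialg{\mathfrak B}$, the unitisation is the extension with ideal $\mathfrak B = \mathbb K\oplus\mathbb K$ and algebra $\mathfrak E^\dagger = \mathfrak E + \mathbb C\,1_{\multialg{\mathfrak B}}$, a general element of which has the form $(\lambda 1 + \mu P + k_1)\oplus((\lambda+\mu)1 + k_2)$ for $\lambda,\mu\in\mathbb C$ and $k_1,k_2\in\mathbb K$; in particular $1_{\multialg{\mathfrak B}} = 1\oplus 1 \notin \mathfrak E$, so this really is a proper unitisation. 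The decisive step is to choose the right test element, namely
\[
 x := 1_{\multialg{\mathfrak B}} - \sigma(1) = (1\oplus1) - (P\oplus1) = (1-P)\oplus 0 \;\in\; \mathfrak E^\dagger .
\]
Because $1-P$ is full in $\multialg{\mathbb K}$ it is not compact, so $x \in \mathfrak E^\dagger\setminus\mathfrak B$.

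It then remains to show that $\overline{x\mathfrak B x^\ast}$ is too small to verify pure largeness. Since $x = (1-P)\oplus 0$ with $1-P$ a projection, $x\mathfrak B x^\ast = (1-P)\mathbb K(1-P)\oplus 0$; this hereditary subalgebra is itself stable and $\sigma$-unital (it is $\cong\mathbb K$), but it is contained in the \emph{proper} ideal $\mathbb K\oplus 0$ of $\mathfrak B$, and hence so is every $C^\ast$-subalgebra of $\overline{x\mathfrak B x^\ast}\subseteq\mathbb K\oplus 0$. Consequently the closed two-sided ideal generated in $\mathfrak B$ by any $C^\ast$-subalgebra of $\overline{x\mathfrak B x^\ast}$ is contained in $\mathbb K\oplus 0\subsetneq\mathfrak B$, so $\overline{x\mathfrak B x^\ast}$ contains no $C^\ast$-subalgebra that is full in $\mathfrak B$ — a fortiori no stable, $\sigma$-unital full one. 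Therefore $\mathfrak E^\dagger$ fails the defining condition of a purely large extension at $x$, and the lemma follows.

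I do not anticipate a real obstacle: everything reduces to bookkeeping, the only slightly delicate points being the verification that $x$ lies in $\mathfrak E^\dagger$ but not in $\mathfrak B$, and that $\overline{x\mathfrak B x^\ast}$ lands inside a single summand of $\mathfrak B$. The conceptual content is simply that the element $1_{\multialg{\mathfrak B}} - \sigma(1)$, which exists only after adjoining a unit, is supported on just one of the two summands of $\mathfrak B$ — precisely because $\sigma(1)$ already carries the unit of $\multialg{\mathbb K}$ in its second coordinate — and this is exactly what the (incorrect) implication ``$\mathfrak E$ purely large $\Rightarrow \mathfrak E^\dagger$ purely large'' overlooks.
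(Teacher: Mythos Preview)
Your proof is correct and follows essentially the same approach as the paper: both use the extension of Example~\ref{e:counterexm} and identify the obstruction as the element $(1-P)\oplus 0$ in the unitisation. The only cosmetic difference is that the paper phrases the argument at the level of the Busby map---observing that $\tau^\dagger(0\oplus 1)=\pi(1-P)\oplus 0$ is not full in $\corona{\mathbb K}\oplus\corona{\mathbb K}$ and invoking the (standard) fact that a purely large extension has full Busby map---whereas you verify the failure of the purely large condition directly from the definition by computing $\overline{x\mathfrak B x^\ast}\subseteq\mathbb K\oplus 0$.
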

\begin{proof}
 Let $0 \to \mathfrak B \to \mathfrak E \to \mathfrak A \to 0$ denote the extension of Example \ref{e:counterexm}. The unitisation $\mathfrak E^\dagger$ has Busby map 
 $ \tau^\dagger \colon \mathbb C \oplus \mathbb C \to \corona{\mathbb K}\oplus \corona{\mathbb K}$ given by $ \tau^\dagger(1\oplus 0) = \pi(P) \oplus 1$ and $\tau^\dagger(0 \oplus 1) = \pi(1-P) \oplus 0$. Since $\pi(1-P) \oplus 0$ is not full in
 $\corona{\mathbb K} \oplus \corona{\mathbb K}$, $\tau^\dagger$ is not a full homomorphism and thus the extension can not be purely large.
\end{proof}

\section{Fixing the theorem}

We will start by showing that the original theorem still holds, if we assume that the quotient is non-unital.

\begin{theorem}\label{t:nonunitalquot}
 Let $0 \to \mathfrak B \to \mathfrak E \to \mathfrak A \to 0$ be an extension of separable $C^\ast$-algebras with $\mathfrak B$ stable. Suppose that $\mathfrak A$ is non-unital. Then the extension is nuclearly absorbing if and only if it is purely
 large.
\end{theorem}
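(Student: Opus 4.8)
The plan is to reduce to the unital Elliott--Kucerovsky theorem by passing to the unitisation, and to fix the one step of \cite{ElliottKucerovsky-extensions} that fails --- the passage of pure largeness from an extension to its unitisation --- using the hypothesis that $\mathfrak A$ is non-unital. Write $\mathfrak e$ for the given extension and $\mathfrak e^\dagger$ for its unitisation, so the extension algebra is $\mathfrak E^\dagger = \mathfrak E + \mathbb C 1 \subseteq \multialg{\mathfrak B}$, the quotient is $\mathfrak A^\dagger$, and $q^\dagger \colon \mathfrak E^\dagger \to \mathfrak A^\dagger$ is the unital extension of the quotient map $q \colon \mathfrak E \to \mathfrak A$. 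I would use, exactly as in \cite{ElliottKucerovsky-extensions}, that $\mathfrak e$ is nuclearly absorbing if and only if $\mathfrak e^\dagger$ is unitally nuclearly absorbing, and that by the unital theorem the latter holds if and only if $\mathfrak e^\dagger$ is purely large; since $\mathfrak e^\dagger$ purely large trivially forces $\mathfrak e$ purely large, the whole statement reduces to the single claim that \emph{if $\mathfrak A$ is non-unital and $\mathfrak e$ is purely large, then $\mathfrak e^\dagger$ is purely large}.

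To prove the claim I would take $x \in \mathfrak E^\dagger \setminus \mathfrak B$ and first note that $\mathfrak E$ is non-unital (a unit of $\mathfrak E$ acts as the identity multiplier on the essential ideal $\mathfrak B$, hence equals $1 \in \multialg{\mathfrak B}$, hence maps onto a unit of $\mathfrak A$), so $1 \notin \mathfrak E$ and $x = y + \lambda 1$ with $y \in \mathfrak E$, $\lambda \in \mathbb C$. If $\lambda = 0$ then $x \in \mathfrak E \setminus \mathfrak B$ and pure largeness of $\mathfrak e$ applies at $x$ directly; if $\lambda \neq 0$, replacing $x$ by $\lambda^{-1}x$ changes neither $\overline{x\mathfrak B x^\ast}$ nor membership in $\mathfrak B$, so I may assume $x = 1 + y$. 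Fixing an approximate unit $(e_\mu)$ of $\mathfrak E$ and setting $a := q(y)$, we have $x e_\mu = e_\mu + y e_\mu \in \mathfrak E$ and $q^\dagger(x e_\mu) = (1 + a)\,q(e_\mu)$, with $(q(e_\mu))$ an approximate unit of $\mathfrak A$. If $q^\dagger(x e_\mu)$ were zero along a cofinal set of indices, then $q(e_\mu) = -a\,q(e_\mu) \to -a$ in norm along that set, so the approximate unit $(q(e_\mu))$ would converge in norm, whence $-a$ would be a unit of $\mathfrak A$, contradicting non-unitality. Hence $x e_\mu \notin \mathfrak B$ for some $\mu$, and pure largeness of $\mathfrak e$ applied at $x e_\mu$ furnishes a $\sigma$-unital, stable $C^\ast$-subalgebra of $\overline{(x e_\mu)\mathfrak B (x e_\mu)^\ast} = \overline{x\,e_\mu\mathfrak B e_\mu\,x^\ast} \subseteq \overline{x\mathfrak B x^\ast}$ which is full in $\mathfrak B$ --- precisely what pure largeness of $\mathfrak e^\dagger$ requires at $x$. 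Granting the claim, the ``if'' direction follows from the chain: $\mathfrak e$ purely large $\Rightarrow \mathfrak e^\dagger$ purely large $\Rightarrow \mathfrak e^\dagger$ unitally nuclearly absorbing $\Rightarrow \mathfrak e$ nuclearly absorbing, and the ``only if'' direction by reversing these implications down to $\mathfrak e^\dagger$ purely large $\Rightarrow \mathfrak e$ purely large.

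The main (and essentially the only) obstacle is keeping $x e_\mu$ outside $\mathfrak B$, i.e.\ the non-vanishing of $q^\dagger(x e_\mu)$, and this is exactly where non-unitality of $\mathfrak A$ is indispensable. For the extension of Example \ref{e:counterexm}, where $\mathfrak A = \mathbb C$, taking $y = -\sigma(1)$ gives $a = -1_{\mathfrak A}$, so $-a = 1_{\mathfrak A}$ \emph{is} a unit of $\mathfrak A$ and the argument --- correctly --- breaks down: here $q^\dagger(x e_\mu) = 0$ for every $\mu$, and there is no subalgebra to be found, consistent with $\mathfrak e^\dagger$ failing to be purely large.
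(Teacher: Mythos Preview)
Your proof is correct and follows essentially the same approach as the paper: reduce to the unital Elliott--Kucerovsky theorem via the unitisation, and show that pure largeness of $\mathfrak e$ passes to $\mathfrak e^\dagger$ by producing, for each $x = 1+y \in \mathfrak E^\dagger$, an element of $\mathfrak E$ whose left multiple by $x$ escapes $\mathfrak B$. The only cosmetic difference is that the paper argues directly that $(1-x)\mathfrak E \subset \mathfrak B$ would force $\pi(x)$ to be a (left, hence two-sided) unit for $\tau(\mathfrak A)$, while you reach the same contradiction by running an approximate unit through $x$ and observing that its image in $\mathfrak A$ would then converge in norm; both arguments pivot on exactly the same use of non-unitality of $\mathfrak A$.
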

\begin{proof}
 As in \cite[Section 16]{ElliottKucerovsky-extensions} the extension is nuclearly absorbing if and only if the unitised extension is unitally nuclearly absorbing which in turn is equivalent to the unitised extension being purely large. Thus it suffices to show that this is equivalent
 to the non-unitised extension being purely large. We use the same proof as in the original paper. Clearly the extension is purely large if the unitisation is purely large. Assume that the non-unital extension is purely large. Note, in particular, that the Busby
 map $\tau$ is injective. It suffices to show, that
 $\overline{(1-x)\mathfrak B (1-x)^\ast}$ contains a stable $C^\ast$-subalgebra which is full in $\mathfrak B$ for any $x \in \mathfrak E$. Suppose that $(1-x) \mathfrak E \subset \mathfrak B$. Then $\pi(x)$ is a unit for $\pi(\mathfrak E) = \tau(\mathfrak A) \subset \corona{\mathfrak B}$.
 However, this contradicts that $\mathfrak A$ is non-unital, since the Busby map $\tau$ is injective. Hence we may find $x' \in \mathfrak E$ such that $(1-x)x' \notin \mathfrak B$. Since
 \[
  \overline{(1-x)x' \mathfrak B ((1-x)x')^\ast} \subset \overline{(1-x) \mathfrak B (1-x)^\ast}
 \]
 and since the non-unital extension is purely large, the former of these contains a stable $C^\ast$-subalgebra which is full in $\mathfrak B$.
\end{proof}

To prove a stronger result, where the assumption that the quotient being unital is removed, we will use the following lemma.

\begin{lemma}\label{l:quotcomplex}
Let $\mathfrak B$ be a stable, separable $C^\ast$-algebra, and let $P\in \multialg{\mathfrak B}$ be a norm-full, properly infinite projection. 
Then the trivial extension of $\mathbb C$ by $\mathfrak B$ with splitting $\sigma$ given by $\sigma(1) = P$, is purely large.
\end{lemma}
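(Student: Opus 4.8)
The plan is to verify the definition of ``purely large'' directly. Identifying $\mathfrak E$ with the $C^\ast$-subalgebra $\mathfrak B+\mathbb C P$ of $\multialg{\mathfrak B}$, an element lies in $\mathfrak E\setminus\mathfrak B$ precisely when its $P$-coefficient is nonzero; since $\overline{(\lambda x)\mathfrak B(\lambda x)^\ast}=\overline{x\mathfrak Bx^\ast}$ for $\lambda\neq 0$, and since hereditary subalgebras of the separable algebra $\mathfrak B$ are automatically $\sigma$-unital, it is enough to produce, for each $x=b+P$ with $b\in\mathfrak B$, a stable $C^\ast$-subalgebra of $\overline{x\mathfrak Bx^\ast}$ that is full in $\mathfrak B$.

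First I would use the proper infiniteness of $P$ to manufacture a stable full corner of $\mathfrak B$ ``modelled on $P$''. There are partial isometries $v_1,v_2,\dots\in\multialg{\mathfrak B}$ with $v_n^\ast v_n=P$ and $Q_n:=v_nv_n^\ast$ pairwise orthogonal and $\le P$; the partial sums of $\sum_nQ_n$ form an increasing bounded net, hence converge strictly. For any cut-off $N$ put $Q':=\sum_{n\ge N}Q_n$. The family $\{v_nv_m^\ast\}_{n,m\ge N}$ is a system of matrix units whose diagonal sum is $Q'$, so $Q'\mathfrak BQ'\cong (Q_N\mathfrak BQ_N)\otimes\mathbb K$ is stable; and since $Q'\ge Q_N\sim P$, the projection $Q'$ generates the same (full) closed ideal of $\mathfrak B$ as $P$ does, so $Q'\mathfrak BQ'$ is full in $\mathfrak B$.

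The substance of the argument is then to carry such a corner into $\overline{x\mathfrak Bx^\ast}$. Put $z:=xQ'$, so $z\mathfrak Bz^\ast=x(Q'\mathfrak BQ')x^\ast\subseteq x\mathfrak Bx^\ast$. Since $Q'\le P$ one computes $z^\ast z=Q'+Q'dQ'$ where $d:=x^\ast x-P\in\mathfrak B$, and by strict convergence of $\sum_nQ_n$ one has $\|Q'd\|\to 0$ as $N\to\infty$; fixing $N$ with $\|Q'dQ'\|<1$ makes $z^\ast z$ invertible in the unital corner $Q'\multialg{\mathfrak B}Q'$. Hence $w:=z(z^\ast z)^{-1/2}\in\multialg{\mathfrak B}$ is a partial isometry with $w^\ast w=Q'$ and $R:=ww^\ast$ a projection, with $z=w(z^\ast z)^{1/2}$ its polar decomposition. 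As $(z^\ast z)^{1/2}$ is invertible in $Q'\multialg{\mathfrak B}Q'$, it generates the same hereditary subalgebra of $\mathfrak B$ as $Q'$, so
\[
\overline{x\mathfrak Bx^\ast}\ \supseteq\ \overline{z\mathfrak Bz^\ast}\ =\ w\big(Q'\mathfrak BQ'\big)w^\ast\ =\ R\mathfrak BR\ \cong\ Q'\mathfrak BQ' .
\]
Thus $R\mathfrak BR$ is a stable $C^\ast$-subalgebra of $\overline{x\mathfrak Bx^\ast}$, and it is full in $\mathfrak B$ because $R\sim Q'$ and $Q'$ is full; this is the required $\mathfrak D$.

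I expect the third paragraph to be the only real obstacle. Abstract characterisations of pure largeness bypass this point by assuming the corona factorisation property; here, instead, $\overline{x\mathfrak Bx^\ast}$ is merely a perturbation by the element $b\in\mathfrak B$ of the model subalgebra $\overline{P\mathfrak BP}$, and there is no evident way to conjugate the stable full corner of the second paragraph inside it. The trick that makes it go through is to throw away finitely many of the $Q_n$'s so that the perturbing term $d=x^\ast x-P$ becomes norm-small against $Q'$; this uses the proper infiniteness of $P$ (to have infinitely many orthogonal copies of $P$ sitting below $P$), while the norm-fullness of $P$ is needed only at the very end to guarantee that $Q'$, and therefore $R\mathfrak BR$, is full.
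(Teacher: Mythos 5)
Your route is genuinely different from the paper's. The paper does not verify the purely large condition by hand: it first notes that, $\mathfrak B$ being stable, a norm-full properly infinite $P$ is Murray--von Neumann equivalent to $1$, then uses an isometry $v$ with $vv^\ast=P$ and Cuntz isometries $t_1,t_2$ in the corner $P\multialg{\mathfrak B}P$ to produce a unital copy of $\mathcal O_2$ with generators $s_1=t_1v$, $s_2=t_2+(1-P)$ satisfying $P=s_1s_1^\ast+s_2Ps_2^\ast$; this exhibits the extension as the Cuntz sum of the unitisation extension (purely large, being self-absorbing) with itself, and \cite[Lemma 13]{ElliottKucerovsky-extensions} finishes. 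Your direct verification --- in particular the device of discarding finitely many $Q_n$ so that the perturbation $d=x^\ast x-P$ becomes small against the tail $Q'$, and then polar-decomposing $xQ'$ --- is a sound and more self-contained way to reach the same conclusion, at the cost of being longer.

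There is, however, one genuinely false step: the claim that the partial sums of $\sum_nQ_n$ converge strictly ``because they form a bounded increasing net.'' Multiplier algebras are not monotone complete, and a sum of pairwise orthogonal projections in $\multialg{\mathfrak B}$ need not converge strictly in general (already for $\mathfrak B=C(X)\otimes\mathbb K$ one can rotate rank-one projections so that $\sum_{n\le N}b^\ast Q_nb$ fails to be norm-Cauchy for a suitable $b$). This is not cosmetic: without strict convergence the tail $Q'$ is not an element of $\multialg{\mathfrak B}$, the matrix-unit argument for stability of $Q'\mathfrak BQ'$ has no diagonal projection to sum to, and the estimate $\|Q'd\|\to 0$ --- the crux of your third paragraph --- has no content. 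The step is reparable, but only by invoking the same fact the paper leans on: since $P$ is norm-full and properly infinite and $\mathfrak B$ is stable, $P\sim 1$; choosing an isometry $v$ with $vv^\ast=P$ and the canonical isometries $T_n\in\multialg{\mathfrak B}$ with $\sum_nT_nT_n^\ast=1$ strictly (available because $\mathfrak B$ is stable and $\sigma$-unital), the elements $v_n:=vT_nv^\ast$ satisfy $v_n^\ast v_n=P$ and $\sum_nv_nv_n^\ast=P$ strictly, after which your argument goes through verbatim. Note this also corrects your closing remark: norm-fullness of $P$ is needed not only at the very end to see that $Q'$ is full, but already to manufacture a strictly summable family $(Q_n)$ in the first place.
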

\begin{proof}
 If $P=1$ then the extension is the canonical unitisation extension which is clearly self-absorbing. It follows from \cite{ElliottKucerovsky-extensions} that it is purely large.

It is well known, since $\mathfrak B$ is stable, that $P$ is full and properly infinite exactly when it is Murray--von Neumann equivalent to $1$. 
Let $v$ be an isometry such that $vv^\ast = P$ and let $t_1,t_2 \in \multialg{\mathfrak B}$ be such that $t_1t_1^\ast + t_2 t_2^\ast =P = t_1^\ast t_1 = t_2^\ast t_2$. 
Then $s_1:= t_1v$ and $s_2:= t_2 + (1-P)$ are the canonical generators of a unital copy of $\mathcal O_2$ in $\multialg{\mathfrak B}$, for which $P = s_1s_1^\ast + s_2 Ps_2^\ast$. Hence
\[
\pi (\sigma(1)) = \pi(s_1) 1 \pi(s_1)^\ast + \pi(s_2) \pi(P) \pi(s_2)^\ast,
\]
which implies that our extension is the Cuntz sum of the unitisation extension and itself. It follows from \cite[Lemma 13]{ElliottKucerovsky-extensions} that our extension is purely large.
\end{proof}

Now for the stronger case where we allow the quotient to be unital.

\begin{theorem}\label{t:unitalquot}
  Let $0 \to \mathfrak B \to \mathfrak E \to \mathfrak A \to 0$ be an extension of separable $C^\ast$-algebras with $\mathfrak B$ stable. 
  The extension is nuclearly absorbing if and only if it is purely large and there is a norm-full, properly infinite projection $P\in \multialg{\mathfrak B}$ such that $P\mathfrak E \subset \mathfrak B$.
\end{theorem}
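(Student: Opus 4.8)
The plan is to deduce both directions from the results already in hand: Theorem~\ref{t:nonunitalquot} for the non-unital-quotient case, Lemma~\ref{l:quotcomplex} for constructing the needed projection, and the Elliott--Kucerovsky unital theorem together with the standard unitisation trick for the unital-quotient case. First consider necessity. Suppose the extension $0 \to \mathfrak B \to \mathfrak E \to \mathfrak A \to 0$ is nuclearly absorbing. Then it absorbs the trivial, weakly nuclear extension of $\mathbb C$ by $\mathfrak B$ with splitting $\sigma(1) = 1_{\multialg{\mathfrak B}}$ (or, if $\mathfrak A$ is unital, we should be a bit careful and instead absorb the appropriate ampliation so that the identity appears); the Cuntz sum with this trivial extension has an extension algebra whose image in $\multialg{\mathfrak B}$ contains a copy of $1_{\multialg{\mathfrak B}}$ sitting under a projection $P$ with $P\mathfrak E \subset \mathfrak B$, and strong unitary equivalence back to $\mathfrak e$ transports this $P$ into $\multialg{\mathfrak B}$ with the same properties; fullness and proper infiniteness of $P$ follow because $1_{\multialg{\mathfrak B}}$ is full and properly infinite ($\mathfrak B$ being stable) and these properties are preserved under the equivalence and under passing to the larger projection. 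That nuclear absorption forces the extension to be purely large is the easy, already-known implication (it is the content of being absorbing, via \cite{ElliottKucerovsky-extensions}), so necessity is complete.

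For sufficiency, assume the extension is purely large and that there is a norm-full, properly infinite $P \in \multialg{\mathfrak B}$ with $P\mathfrak E \subset \mathfrak B$. The idea is to form the Cuntz sum $\mathfrak e \oplus \mathfrak f$, where $\mathfrak f$ is the trivial extension of $\mathbb C$ by $\mathfrak B$ with splitting $\sigma(1) = P$, which is purely large by Lemma~\ref{l:quotcomplex}. I claim $\mathfrak e \oplus \mathfrak f$ has non-unital quotient: its quotient is $\mathfrak A \oplus \mathbb C$ if $\mathfrak A$ is unital, and $\mathfrak A$ (with an extra summand $\mathbb C$) — in any case, one should check directly that $\mathfrak A \oplus \mathbb C$ is non-unital precisely when... no: $\mathfrak A \oplus \mathbb C$ is unital iff $\mathfrak A$ is unital. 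So instead the point is subtler: we want to arrange that after Cuntz-summing with $\mathfrak f$ the resulting extension is \emph{both} purely large \emph{and} nuclearly absorbing by Theorem~\ref{t:nonunitalquot} or by a direct unital argument, and then that $\mathfrak e$ itself absorbs, hence is already absorbing. The cleaner route: since $P\mathfrak E\subset\mathfrak B$ means $\pi(P)$ acts as a unit on $\tau(\mathfrak A)$, the orthogonal complement argument lets us realise $\mathfrak e$ as a sub-extension on which $\pi(P)$ is the unit; then $\mathfrak e \oplus \mathfrak f$ is (unitally, weakly nuclearly) equivalent to an extension of $\mathfrak A$ with the full properly infinite projection $1$ as the relevant unit, so the unital Elliott--Kucerovsky theorem applies to show $\mathfrak e \oplus \mathfrak f$ is nuclearly absorbing; and $\mathfrak f$ is itself trivial and weakly nuclear, so $\mathfrak e$ absorbs $\mathfrak f$, giving $\mathfrak e \cong \mathfrak e \oplus \mathfrak f \oplus \mathfrak f \oplus \cdots$, from which nuclear absorption of $\mathfrak e$ follows by a standard Eilenberg-swindle/intertwining argument.

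The main obstacle, and the step deserving the most care, is the reduction that turns the hypothesis ``$P\mathfrak E\subset\mathfrak B$ with $P$ full and properly infinite'' into a genuine \emph{unital} extension picture to which \cite[Theorem in \S16]{ElliottKucerovsky-extensions} applies. Concretely: $\pi(P)$ is a unit for $\pi(\mathfrak E)=\tau(\mathfrak A)\subset\corona{\mathfrak B}$, so $\mathfrak E$ is really an extension ``living over'' the corner $\pi(P)\corona{\mathfrak B}\pi(P)$, but this corner need not be a corona algebra of a stable $C^\ast$-algebra in the obvious way unless we use that $P$ is Murray--von Neumann equivalent to $1$ (because $\mathfrak B$ is stable, full + properly infinite $\Leftrightarrow P\sim 1$, exactly as invoked in the proof of Lemma~\ref{l:quotcomplex}). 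Conjugating by an isometry $v$ with $vv^\ast=P$ identifies $P\multialg{\mathfrak B}P\cong\multialg{\mathfrak B}$ and $\pi(P)\corona{\mathfrak B}\pi(P)\cong\corona{\mathfrak B}$ compatibly, turning $\mathfrak e$ into a unital extension of $\mathfrak A^\dagger$ (or of $\mathfrak A$ itself when $\mathfrak A$ is unital) by $\mathfrak B$ whose pure largeness is preserved under the conjugation; then the unital theorem gives unital nuclear absorption of that extension, and unconjugating plus the swindle finishes the argument. I would also need to verify that pure largeness is stable under Cuntz sums and under the isometry conjugation, and that weak nuclearity of splittings is preserved throughout — these are routine but must be stated.
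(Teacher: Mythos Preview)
Your sufficiency argument contains a genuine error: you write that ``$P\mathfrak E\subset\mathfrak B$ means $\pi(P)$ acts as a unit on $\tau(\mathfrak A)$'', but in fact $P\mathfrak E\subset\mathfrak B$ says exactly that $\pi(P)\tau(\mathfrak A)=0$, i.e.\ $\pi(P)$ \emph{annihilates} the image of the Busby map. Everything you build on this premise --- that $\mathfrak e$ ``lives over'' the corner $\pi(P)\corona{\mathfrak B}\pi(P)$, that conjugating by an isometry with range $P$ turns $\mathfrak e$ into a unital extension --- therefore collapses. You might try to repair this by swapping $P$ for $1-P$, but nothing in the hypotheses guarantees $1-P$ is full or properly infinite, so the conjugation trick does not go through. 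Your necessity argument is also off: nuclearly absorbing means absorbing trivial weakly nuclear extensions \emph{of $\mathfrak A$} by $\mathfrak B$, not extensions of $\mathbb C$, so the extension with splitting $1\mapsto 1_{\multialg{\mathfrak B}}$ is not the right object; the clean choice is the \emph{zero} extension of $\mathfrak A$, after which $P=0\oplus 1$ (Cuntz sum coordinates) visibly satisfies $P\mathfrak E\subset\mathfrak B$ and is full and properly infinite.

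The paper's route for sufficiency is much more direct and avoids any swindle or conjugation. One reduces (as in the non-unital-quotient case) to showing the unitised extension is purely large, i.e.\ that $\overline{(1-x)\mathfrak B(1-x)^\ast}$ contains a stable full subalgebra for every $x\in\mathfrak E$. The key observation is
\[
\overline{(1-x)P\mathfrak B P(1-x)^\ast}\subset\overline{(1-x)\mathfrak B(1-x)^\ast},
\]
and since $xP\in\mathfrak B$ by hypothesis, $(1-x)P=P-xP$ is an element of $\mathfrak B+\mathbb C P$ lying outside $\mathfrak B$. So it suffices that the little extension $0\to\mathfrak B\to\mathfrak B+\mathbb C P\to\mathbb C\to 0$ be purely large, which is precisely Lemma~\ref{l:quotcomplex}. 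This uses the annihilation property $P\mathfrak E\subset\mathfrak B$ in the correct direction and bypasses the Cuntz-sum-plus-swindle machinery entirely.
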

\begin{proof}
 Assume that the extension is nuclearly absorbing. Then it absorbs the zero extension so we may assume that the Busby map is of the form $\tau\oplus 0$, where $\oplus$ denotes a Cuntz sum. 
 Let $P = 0\oplus 1$. Then $P \mathfrak E \subset \mathfrak B$ since $\pi(P)$ annihilates the image of the Busby map. Moreover, the extension absorbs some purely large extension and is thus itself purely large by \cite[Lemma 13]{ElliottKucerovsky-extensions}.
 
 Now suppose that the extension is purely large and that $P$ is a full, properly infinite projection such that $P\mathfrak E \subset \mathfrak B$. As in the proof of Theorem \ref{t:nonunitalquot} it suffices to show
 that the unitised extension is purely large. It is enough to show that $\overline{(1-x
) \mathfrak B (1-x)^\ast}$ contains a stable $C^\ast$-subalgebra which is full in $\mathfrak B$, for
 any $x\in \mathfrak E$. Observe that
 \[
  \overline{(1-x)P\mathfrak B P(1-x)^\ast} \subset \overline{(1-x) \mathfrak B (1-x)^\ast}.
 \]
Since $(1-x)P = P - xP$ and $xP \in \mathfrak B$, it suffices to show that the extension $0 \to \mathfrak B \to \mathfrak B + \mathbb C P \to \mathbb C \to 0$ is purely large. This follows from Lemma \ref{l:quotcomplex}.
\end{proof}

Note that an extension must be non-unital in order to satisfy the equivalent conditions in the above theorem. We immediately get the following corollary. 

\begin{corollary}\label{c:abszero}
  Let $0 \to \mathfrak B \to \mathfrak E \to \mathfrak A \to 0$ be an extension of separable $C^\ast$-algebras with $\mathfrak B$ stable.
  Then the extension is nuclearly absorbing if and only if it is purely large and absorbs the zero extension.
\end{corollary}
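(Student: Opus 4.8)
The plan is to deduce this immediately from Theorem~\ref{t:unitalquot}, whose statement isolates exactly the extra datum --- a norm-full, properly infinite projection $P \in \multialg{\mathfrak B}$ with $P\mathfrak E \subset \mathfrak B$ --- that one needs beyond pure largeness, and to show that this datum is equivalent to absorbing the zero extension.

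For the forward implication, suppose the extension is nuclearly absorbing. By Theorem~\ref{t:unitalquot} it is purely large, and it absorbs the zero extension because the zero extension is both trivial --- its Busby map is the $\ast$-homomorphism $0$ and the zero splitting is vacuously weakly nuclear --- and hence lies in the class of extensions that a nuclearly absorbing extension is required to absorb.

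For the converse, suppose the extension, with Busby map $\tau \colon \mathfrak A \to \corona{\mathfrak B}$, is purely large and absorbs the zero extension. Since $\mathfrak B$ is stable, fix isometries $s_1, s_2 \in \multialg{\mathfrak B}$ with $s_1 s_1^\ast + s_2 s_2^\ast = 1$; these then satisfy $s_2^\ast s_1 = 0$. The Cuntz sum of our extension with the zero extension has Busby map $a \mapsto \pi(s_1)\tau(a)\pi(s_1)^\ast$, and by hypothesis this is strongly unitarily equivalent to $\tau$. Since being purely large and being nuclearly absorbing are both invariant under strong unitary equivalence of extensions, I would replace $\tau$ by $\pi(s_1)\tau(\cdot)\pi(s_1)^\ast$ and work with the corresponding extension algebra $\mathfrak E \subset \multialg{\mathfrak B}$. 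Now set $P := s_2 s_2^\ast$. Then $P$ is Murray--von Neumann equivalent to $s_2^\ast s_2 = 1$, hence norm-full, and $1 \in \multialg{\mathfrak B}$ is properly infinite because $\mathfrak B$ is stable, so $P$ is properly infinite as well. Finally, for $x \in \mathfrak E$ we have $\pi(x) = \pi(s_1)\tau(a)\pi(s_1)^\ast$ for some $a \in \mathfrak A$, so
\[
 \pi(Px) = \pi(s_2)\pi(s_2)^\ast\pi(s_1)\tau(a)\pi(s_1)^\ast = 0
\]
since $\pi(s_2)^\ast\pi(s_1) = \pi(s_2^\ast s_1) = 0$; that is, $P\mathfrak E \subset \mathfrak B$. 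Theorem~\ref{t:unitalquot} then yields that the (replaced, hence also the original) extension is nuclearly absorbing.

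There is essentially no real obstacle here, the substance being carried entirely by Theorem~\ref{t:unitalquot}. The only points that deserve care are that the norm-fullness and proper infiniteness of $P$ come for free from the stability of $\mathfrak B$ (through Murray--von Neumann equivalence with $1$), and that pure largeness is unchanged when one conjugates the Busby map by a unitary in $\multialg{\mathfrak B}$, since this merely conjugates the relevant hereditary subalgebras $\overline{x\mathfrak B x^\ast}$ by that same unitary.
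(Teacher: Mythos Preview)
Your proof is correct and follows the same route the paper intends: the corollary is stated as immediate from Theorem~\ref{t:unitalquot}, and you have simply spelled out why absorbing the zero extension produces the required norm-full properly infinite projection $P = s_2 s_2^\ast$ annihilating $\mathfrak E$ modulo $\mathfrak B$ (the paper's own proof of the forward direction of Theorem~\ref{t:unitalquot} already contains exactly this observation with $P = 0 \oplus 1$).
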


When we assume that the ideal has the corona factorisation property, then we get a perhaps more hands on way for checking if a full extension is nuclearly absorbing. To exhibit this we introduce the following definition.

\begin{definition}\label{d:unitfull}
Let $0 \to \mathfrak B \to \mathfrak E \to \mathfrak A \to 0$ be an extension of $C^\ast$-algebras. We say that the extension is \emph{unitisably full} if the unitised extension $0 \to \mathfrak B \to \mathfrak E^\dagger \to \mathfrak A^\dagger \to 0$ is full.
\end{definition}

It is clear that if an extension is unitisably full, then it is full and non-unital. If the quotient algebra $\mathfrak A$ is unital, then the extension is unitisably full if and only if the extension is full and $1_{\corona{\mathfrak B}}-\tau(1_{\mathfrak A})$ is full, where $\tau$ denotes the Busby map. Note that this case is our main concern due to Theorem \ref{t:nonunitalquot}.

\begin{theorem}\label{t:coronaunitfull}
 Let $0 \to \mathfrak B \to \mathfrak E \to \mathfrak A \to 0$ be an extension of separable $C^\ast$-algebras, such that $\mathfrak B$ is stable and has the corona factorisation property. Then the extension is nuclearly
 absorbing if and only if the extension is unitisably full.
\end{theorem}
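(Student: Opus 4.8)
The plan is to reduce everything, via unitisation, to the correct \emph{unital} Elliott--Kucerovsky theorem quoted in the introduction, using the corona factorisation property of $\mathfrak B$ only to pass from ``full'' back to ``purely large''.

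For the implication ``unitisably full $\Rightarrow$ nuclearly absorbing'': by hypothesis the unitised extension $\mathfrak e^\dagger\colon 0\to\mathfrak B\to\mathfrak E^\dagger\to\mathfrak A^\dagger\to 0$ is full, its quotient $\mathfrak A^\dagger$ is unital, and $\mathfrak B$ is separable, hence $\sigma$-unital, stable, and has the corona factorisation property. So the Kucerovsky--Ng result from the introduction shows that $\mathfrak e^\dagger$ is purely large, and since $\mathfrak E^\dagger$ is separable and unital with stable ideal $\mathfrak B$, the (true) unital Elliott--Kucerovsky theorem shows that $\mathfrak e^\dagger$ is unitally nuclearly absorbing. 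Finally, exactly as in the proof of Theorem~\ref{t:nonunitalquot} (using \cite[Section 16]{ElliottKucerovsky-extensions}), $\mathfrak e^\dagger$ being unitally nuclearly absorbing is equivalent to the original extension being nuclearly absorbing.

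For the converse ``nuclearly absorbing $\Rightarrow$ unitisably full'': first, a nuclearly absorbing extension absorbs a full trivial weakly nuclear extension, and hence is itself full, so the Busby map $\tau$ is a full $\ast$-homomorphism. Next, by Theorem~\ref{t:unitalquot} there is a norm-full, properly infinite projection $P\in\multialg{\mathfrak B}$ with $P\mathfrak E\subseteq\mathfrak B$; taking adjoints also $\mathfrak E P\subseteq\mathfrak B$, so $\pi(P)$ is a norm-full projection in $\corona{\mathfrak B}$ with $\pi(P)\tau(\mathfrak A)=\tau(\mathfrak A)\pi(P)=0$. Now I verify that the unitised Busby map $\tau^\dagger\colon\mathfrak A^\dagger\to\corona{\mathfrak B}$ is full. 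Let $0\neq y\in\mathfrak A^\dagger$. If $y\in\mathfrak A$ then $\tau^\dagger(y)=\tau(y)$ is full because $\tau$ is. Otherwise $y=\lambda 1_{\mathfrak A^\dagger}+a$ with $0\neq\lambda\in\mathbb C$ and $a\in\mathfrak A$, and the closed ideal of $\corona{\mathfrak B}$ generated by $\tau^\dagger(y)=\lambda 1_{\corona{\mathfrak B}}+\tau(a)$ contains $\pi(P)\tau^\dagger(y)\pi(P)=\lambda\pi(P)$, which is full; hence $\tau^\dagger(y)$ is full. Thus $\tau^\dagger$ is full, i.e.~the extension is unitisably full.

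The main obstacle is the ``only if'' direction, and within it the step that a nuclearly absorbing extension is full: this rests on the existence of a full trivial weakly nuclear extension of $\mathfrak A$ by $\mathfrak B$ to be absorbed. One could instead bypass Theorem~\ref{t:unitalquot} by arguing directly that $\mathfrak e^\dagger$ is unitally nuclearly absorbing, hence full, giving unitisable fullness at once; but the argument above makes transparent \emph{why} the projection $P$ of Theorem~\ref{t:unitalquot} is exactly what upgrades ordinary fullness of $\tau$ to fullness of $\tau^\dagger$, i.e.~to unitisable fullness.
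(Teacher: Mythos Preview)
Your forward direction is exactly the paper's argument, just unpacked: the paper writes ``as in the proof of Theorem~\ref{t:nonunitalquot} the extension is nuclearly absorbing if and only if the unitised extension is purely large,'' and then invokes the corona factorisation property (via Kucerovsky--Ng) for the equivalence with unitisable fullness.

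Your backward direction is correct but takes a genuinely different route. The paper simply reverses the same chain: nuclearly absorbing $\Rightarrow$ unitised extension purely large $\Rightarrow$ unitised extension full (the last implication holding for purely large extensions in general). You instead invoke Theorem~\ref{t:unitalquot} to produce the full, properly infinite projection $P$ orthogonal (mod $\mathfrak B$) to the image of $\tau$, and then verify fullness of $\tau^\dagger$ by hand, using $\pi(P)\tau^\dagger(\lambda 1+a)\pi(P)=\lambda\pi(P)$. This is longer but has the merit of making completely explicit how the projection $P$ of Theorem~\ref{t:unitalquot} witnesses unitisable fullness; the paper's two-line proof hides this mechanism. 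Amusingly, the alternative you sketch in your final paragraph --- arguing directly that $\mathfrak e^\dagger$ is unitally nuclearly absorbing, hence full --- \emph{is} the paper's proof.

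On your acknowledged ``main obstacle'': the existence of a full trivial weakly nuclear extension of $\mathfrak A$ by $\mathfrak B$ is standard (it follows, for instance, from the existence of a nuclearly absorbing trivial extension, as in Kasparov's or Thomsen's theorem), so this step is fine. Note also that neither your backward argument nor the paper's uses the corona factorisation property; it is needed only for the forward direction.
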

\begin{proof}
As in the proof of Theorem \ref{t:nonunitalquot} the extension is nuclearly absorbing if and only if the unitised extension is purely large. Since $\mathfrak B$ has the corona factorisation property, this is the case if and only if the extension is unitisably full.
\end{proof}

We will end this section by showing that in the absence of the corona factorisation property, there are purely large, unitisably full extensions which are not nuclearly absorbing. We will need a converse of Lemma \ref{l:quotcomplex}.

\begin{proposition}\label{p:extofC}
Let $\mathfrak B$ be a stable, separable $C^\ast$-algebra, and let $P\in \multialg{\mathfrak B}$ be a norm-full projection. 
Then the trivial extension of $\mathbb C$ by $\mathfrak B$ with splitting $\sigma$ given by $\sigma(1) = P$, is purely large if and only if $P$ is properly infinite.
\end{proposition}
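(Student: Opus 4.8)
The implication ``$P$ properly infinite $\Rightarrow$ the extension is purely large'' is exactly Lemma~\ref{l:quotcomplex}, since $P$ is assumed norm-full throughout; so all the work is in the converse. Write the extension as $0 \to \mathfrak B \to \mathfrak E \to \mathbb C \to 0$ with $\mathfrak E = \mathfrak B + \mathbb C P \subseteq \multialg{\mathfrak B}$, and assume it is purely large. I want to show $P$ is properly infinite. First note that $P \notin \mathfrak B$: otherwise the closed ideal of $\multialg{\mathfrak B}$ generated by $P$ would sit inside $\mathfrak B$, which is a proper ideal as $\mathfrak B$ is stable, contradicting norm-fullness of $P$. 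Hence $P \in \mathfrak E \setminus \mathfrak B$, and applying the defining property of purely large extensions to the element $x = P$ produces a stable, $\sigma$-unital $C^*$-subalgebra $\mathfrak D \subseteq \overline{P\mathfrak B P}$ that is full in $\mathfrak B$. Put $\mathfrak C := \overline{P\mathfrak B P}$; since $P$ is norm-full, $\mathfrak C$ is a full corner of $\mathfrak B$, so $\multialg{\mathfrak C} = P\multialg{\mathfrak B}P$ and $P$ is (the image of) the unit of $\multialg{\mathfrak C}$. As proper infiniteness of $P$ is an internal property of the corner $P\multialg{\mathfrak B}P$, it therefore suffices to prove that $\mathfrak C$ is a properly infinite $C^*$-algebra.

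The plan for that last point is a Cuntz-comparison argument inside $\mathfrak C$. Fix a strictly positive element $h$ of $\mathfrak D$; as $h \in P\mathfrak B P$ we have $Ph = hP = h$, from which one checks that $\mathfrak D$ is still full in $\mathfrak C$ and that $h$ is full in both $\mathfrak B$ and $\mathfrak C$. Let $k$ be a strictly positive element of $\mathfrak C$. Stability of $\mathfrak D$ gives that, for every $n$ and $\varepsilon>0$, the direct sum of $n$ copies of $(h-\varepsilon)_+$ is Cuntz-below $h$; and fullness of $h$ in $\mathfrak C$ gives, for every $\varepsilon>0$, an $m_\varepsilon$ with $(k-\varepsilon)_+$ Cuntz-below the direct sum of $m_\varepsilon$ copies of $h$. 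Chaining these two estimates (with the usual care over the cut-downs) yields $(k-\varepsilon)_+ \oplus (k-\varepsilon)_+ \precsim h \precsim k$ for every $\varepsilon>0$, which is precisely the statement that the class of $k$ is properly infinite in the Cuntz semigroup of $\mathfrak C$, i.e.\ that $\mathfrak C$ is properly infinite. Transporting this back via $\multialg{\mathfrak C} = P\multialg{\mathfrak B}P$ shows $P$ is properly infinite in $\multialg{\mathfrak B}$, completing the converse.

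The only genuinely delicate point is the Cuntz-comparison step just described: one must book-keep the $\varepsilon$-cut-downs carefully when combining ``$n\,(h-\varepsilon)_+ \precsim h$'' (stability of $\mathfrak D$) with ``$(k-\varepsilon)_+ \precsim m_\varepsilon\cdot h$'' (fullness of $h$ in $\mathfrak C$), and one uses the standard facts that a $\sigma$-unital $C^*$-algebra with strictly positive element $k$ is properly infinite if and only if $2[(k-\varepsilon)_+] \le [k]$ in its Cuntz semigroup for all $\varepsilon>0$, and that $\multialg{\overline{P\mathfrak B P}} = P\multialg{\mathfrak B}P$ for a norm-full projection $P$. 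Alternatively, since the equivalence between proper infiniteness of full multiplier projections and the presence of a full stable $\sigma$-unital subalgebra of the relevant hereditary subalgebra is exactly the mechanism behind the relationship between the corona factorisation property and purely large extensions, one may quote the corresponding result from \cite{KucerovskyNg-corona} in place of this argument.
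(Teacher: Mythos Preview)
Your approach is genuinely different from the paper's, and the difference is worth recording. The paper does \emph{not} attack the corner $P\mathfrak B P$ directly: instead it replaces $P$ by the Cuntz sum $P\oplus 0$, observes that the resulting extension is purely large \emph{and} absorbs the zero extension, and then invokes Corollary~\ref{c:abszero} to conclude it is nuclearly absorbing. Comparing with the (also absorbing) extension determined by $1\oplus 0$ yields a unitary $U$ with $U^\ast(P\oplus 0)U - (1\oplus 0)\in\mathfrak B$, and a short isometry argument using stability of $\mathfrak B$ then forces $P\oplus 0$ (hence $P$) to be properly infinite. In other words, the paper leverages the absorption machinery already established in the note rather than any Cuntz-semigroup computation.

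Your route, by contrast, extracts from pure largeness at the single element $x=P$ a full stable $\sigma$-unital $\mathfrak D\subseteq\mathfrak C:=\overline{P\mathfrak B P}$ and then runs a Cuntz-comparison argument inside $\mathfrak C$. The comparison part is fine: you do obtain $(k-\varepsilon)_+\oplus(k-\varepsilon)_+\precsim h\precsim k$ for all $\varepsilon>0$. The difficulty is the last step, where you pass from this Cuntz statement about elements of $\mathfrak C$ to proper infiniteness of $P=1_{\multialg{\mathfrak C}}$. That is a statement about the existence of isometries in the \emph{multiplier} algebra, and it does not follow formally from what you have shown; note that $P\not\precsim k$ in $\multialg{\mathfrak C}$ (since $k\in\mathfrak C$ and $P\notin\mathfrak C$), so one cannot simply transport the relation upward. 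The phrase ``$\mathfrak C$ is properly infinite $\Rightarrow$ $1_{\multialg{\mathfrak C}}$ is properly infinite'' hides real work, and your write-up does not supply it. Your fallback citation to \cite{KucerovskyNg-corona} is also not on point: that paper establishes the implication ``corona factorisation property $\Rightarrow$ full extensions are purely large'' (essentially the direction of Lemma~\ref{l:quotcomplex}), not the converse implication you need here. So as written there is a gap; the paper's absorption argument avoids exactly this multiplier-versus-algebra issue.
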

\begin{proof}
 One direction is Lemma \ref{l:quotcomplex}. Suppose that the extension is purely large. It suffices to show that the Cuntz sum $P\oplus 0$ is properly infinite. The extension with splitting $\sigma'(1) = P\oplus 0$ is purely large and absorbs the zero extension,
 and thus it is absorbing by Corollary \ref{c:abszero}. Since the extension with splitting $\sigma_0 (1) = 1\oplus 0$ is also absorbing, there is a unitary $U\in \multialg{\mathfrak B}$ such that $U^\ast (P \oplus 0) U - 1\oplus 0 \in \mathfrak B$. 
 Pick an isometry $V \in \multialg{\mathfrak B}$ such that $V^\ast (1\oplus 0) V = 1$. Then $V^\ast( U^\ast (P \oplus 0) U - 1\oplus 0)V = (UV)^\ast (P\oplus 0) UV - 1 \in \mathfrak B$. Since $\mathfrak B$ is stable, we may find an isometry $W$ such that
 \[ 
  \| (UVW)^\ast (P\oplus 0) UVW - 1\| = \| W^\ast( (UV)^\ast (P\oplus 0) UV - 1 ) W\| < 1.
 \]
 This implies that $P\oplus 0$, and thus also $P$, is properly infinite.
\end{proof}

We can now extend our class of counter examples to include purely large, unitisably full extensions $0 \to \mathfrak B \to \mathfrak E \to \mathfrak A \to 0$, which are not nuclearly absorbing. 
In fact, such an extension can be made for any $\mathfrak B$ without the corona factorisation property.

\begin{proposition}\label{p:noncoronanonabs}
 Let $\mathfrak B$ be a stable, separable $C^\ast$-algebra which does not have the corona factorisation property. Then there is a purely large, unitisably full extension of $\mathbb C$ by $\mathfrak B$ which is not nuclearly absorbing.
\end{proposition}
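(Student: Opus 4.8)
The plan is to build the required extension explicitly from the projection witnessing the failure of the corona factorisation property. Since $\mathfrak B$ does not have the corona factorisation property, there is a norm-full projection $R_0 \in \multialg{\mathfrak B}$ which is \emph{not} properly infinite. Fix isometries $v_1,v_2 \in \multialg{\mathfrak B}$ with $v_1v_1^\ast + v_2v_2^\ast = 1$ (available since $\mathfrak B$ is stable), and set
\[
 R := v_2 R_0 v_2^\ast, \qquad Q := 1 - R = v_1v_1^\ast + v_2(1-R_0)v_2^\ast .
\]
I would take the trivial extension $\mathfrak e$ of $\mathbb C$ by $\mathfrak B$ with splitting $\sigma(1) = Q$. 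Two elementary facts drive everything: $R \sim R_0$, so $R$ is norm-full and still not properly infinite; and $Q \geq v_1v_1^\ast \sim 1$, so $Q$ dominates a norm-full, properly infinite projection and hence (using that $1_{\multialg{\mathfrak B}}$ is properly infinite since $\mathfrak B$ is stable, together with $Q \precsim 1$) the projection $Q$ is itself norm-full and properly infinite.

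Purely large and unitisably full then follow formally. By Lemma~\ref{l:quotcomplex}, $\mathfrak e$ is purely large because $Q$ is norm-full and properly infinite. For unitisable fullness: since $\mathbb C$ is unital, by the remark following Definition~\ref{d:unitfull} it suffices that $\pi(Q)$ and $\pi(1-Q) = \pi(R)$ be full in $\corona{\mathfrak B}$; but both $Q$ (being $\geq v_1v_1^\ast$) and $R$ (being $\sim R_0$) are norm-full in $\multialg{\mathfrak B}$, and applying the surjection $\pi$ preserves fullness.

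The substance of the proposition is that $\mathfrak e$ is not nuclearly absorbing. By Theorem~\ref{t:unitalquot}, since $\mathfrak e$ is purely large, it is nuclearly absorbing if and only if there is a norm-full, properly infinite $P \in \multialg{\mathfrak B}$ with $P\mathfrak E \subseteq \mathfrak B$, equivalently $\pi(P)\pi(Q) = 0$, i.e.\ $\pi(P) \leq \pi(1-Q) = \pi(R)$. I would argue by contradiction: assume such a $P$ exists and deduce that $R$ is properly infinite. Being norm-full and properly infinite, $P$ is Murray--von Neumann equivalent to $1$, so $P = vv^\ast$ for an isometry $v \in \multialg{\mathfrak B}$; then $\pi(P)\leq\pi(R)$ gives $\pi(R)\pi(v) = \pi(v)$, whence $(1-R)v \in \mathfrak B$ and $b := v^\ast(1-R)v \in \mathfrak B$. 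Using stability of $\mathfrak B$ exactly as in the proof of Proposition~\ref{p:extofC}, choose an isometry $W \in \multialg{\mathfrak B}$ with $\|W^\ast b W\| < 1$ and set $v' := vW$; then $v'^\ast R v' = 1 - W^\ast b W$ is positive and invertible, so $a := Rv'(v'^\ast R v')^{-1/2}$ is an isometry whose range projection satisfies $aa^\ast \leq R$ and $aa^\ast \sim 1$. Thus $R$ dominates a copy of $1$ and is properly infinite, contradicting the choice of $R_0$. Hence no such $P$ exists and $\mathfrak e$ is not nuclearly absorbing.

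I expect the last paragraph to be the only delicate point: promoting the corona-level inequality $\pi(P) \leq \pi(R)$ to an honest subprojection of $R$ that is Murray--von Neumann equivalent to $1$, via the stabilisation/perturbation trick borrowed from the proof of Proposition~\ref{p:extofC}. Everything else reduces to routine bookkeeping with fullness, proper infiniteness, and Cuntz subequivalence, together with the cited Lemma~\ref{l:quotcomplex} and Theorem~\ref{t:unitalquot}.
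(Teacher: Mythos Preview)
Your construction is exactly the paper's (with $R_0$ playing the role of the paper's $Q'$ and the Cuntz summand placed in the other slot), and your verifications of ``purely large'' and ``unitisably full'' match the paper's. The only divergence is in the non-absorption step. The paper does not go through Theorem~\ref{t:unitalquot}; instead it uses the equivalence ``nuclearly absorbing $\Leftrightarrow$ unitisation purely large'' together with Proposition~\ref{p:extofC} in the reverse direction: if the unitisation were purely large, then $\overline{(R-b)\mathfrak B(R-b)^\ast}$ would contain a full stable subalgebra for every $b\in\mathfrak B$, making the trivial extension with splitting $\sigma_0(1)=R$ purely large and forcing $R$ to be properly infinite --- a contradiction. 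You instead invoke Theorem~\ref{t:unitalquot} and then carry out the isometry/perturbation argument by hand to promote $\pi(P)\le\pi(R)$ to $1\precsim R$ in $\multialg{\mathfrak B}$, which is essentially re-deriving the relevant half of Proposition~\ref{p:extofC} in situ. Both routes are correct; the paper's is a touch more economical because the work is already packaged in Proposition~\ref{p:extofC}, while yours makes the underlying mechanism explicit and confirms that the ``delicate point'' you flagged really does go through.
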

\begin{proof}
 Let $Q$ be a full multiplier projection which is not properly infinite, but where $P:= 1-Q$ is properly infinite and full. 
 Such a projection can be obtained by taking any full multiplier projection $Q'$ which is not properly infinite, and letting $Q = Q' \oplus 0$ be a Cuntz sum. In fact, $P=1-Q$ will be properly infinite since it majorises the properly infinite, full projection $0\oplus 1$. 
 Consider the trivial extension $\mathfrak E$ of $\mathbb C$ by $\mathfrak B$ with splitting $\sigma(1) = P$. The unitised extension has a splitting $\sigma_1 \colon \mathbb C \oplus \mathbb C \to \multialg{\mathfrak B}$ given by $\sigma_1(1 \oplus 0) = P$ and $\sigma_1(0\oplus 1) = Q$. Since both $P$ and $Q$ are full and orthogonal, the unitised extension is full. 
 
 By Proposition \ref{p:extofC} the extension is purely large. As seen in \cite{ElliottKucerovsky-extensions} such an extension is nuclearly absorbing exactly when its unitisation is purely large. If the unitisation was purely large,
 then $\overline{(Q-b)\mathfrak B(Q-b)^\ast}$ would contain a stable $C^\ast$-subalgebra full in $\mathfrak B$, for every $b\in \mathfrak B$. 
 However, this would imply that the extension of $\mathbb C$ by $\mathfrak B$ with splitting $\sigma_0(1) = Q$ is purely large, which it is not by Proposition \ref{p:extofC}. Hence the extension is not nuclearly absorbing.
\end{proof}

\section{How this affects classification results}

In the classification of non-simple $C^\ast$-algebras, a popular result has been a result of Kucerovsky and Ng, which says that under the mild condition of the corona factorisation property on a stable, separable $C^\ast$-algebra $\mathfrak B$, 
$KK^1(\mathfrak A,\mathfrak B)$ is the group of unitary equivalence classes of full extensions $\mathfrak E$ of $\mathfrak A$ by $\mathfrak B$ for any nuclear separable $C^\ast$-algebra $\mathfrak A$. This is unfortunately not the case.
The theorem only remains true if one adds the condition that the extensions are unitisably full as in Definition \ref{d:unitfull}. See Theorem \ref{t:KK1class} below.

A counter example of the original result could be as follows.

\begin{example}
Let $0 \to \mathfrak B \to \mathfrak E \to \mathfrak A \to 0$ be the extension from Example \ref{e:counterexm} with Busby map $\tau$. Then $\mathfrak B$ has the corona factorisation property and the extension is full. 
As seen in Example \ref{e:counterexm}, $\tau$ and $\tau \oplus 0$ are both non-unital and are \emph{not} unitarily equivalent. However, they define the same element in $KK^1(\mathfrak A, \mathfrak B)$.
\end{example}

The closest we get to fixing the theorem would be the following.

\begin{theorem}\label{t:KK1class}
 Let $\mathfrak B$ be a separable, stable $C^\ast$-algebra. Then the following are equivalent.
 \begin{itemize}
  \item[$(i)$] $\mathfrak B$ has the corona factorisation property,
  \item[$(ii)$] for any separable $C^\ast$-algebra $\mathfrak A$, $KK^1_\nuc(\mathfrak A, \mathfrak B)$ is the group of strong unitary equivalence classes of all full, weakly nuclear extensions of $\mathfrak A$ by $\mathfrak B$ which absorb the zero extension,
  \item[$(iii)$] for any separable $C^\ast$-algebra $\mathfrak A$, $KK^1_\nuc(\mathfrak A, \mathfrak B)$ is the group of strong unitary equivalence classes of all full, weakly nuclear extensions $\mathfrak E$ of $\mathfrak A$ by $\mathfrak B$, 
  for which there is a norm-full projection $P\in \multialg{\mathfrak B}$ such that $P \mathfrak E \subset \mathfrak B$.
  \item[$(iv)$] for any separable $C^\ast$-algebra $\mathfrak A$, $KK^1_\nuc(\mathfrak A, \mathfrak B)$ is the group of strong unitary equivalence classes of all unitisably full, weakly nuclear extensions $\mathfrak E$ of $\mathfrak A$ by $\mathfrak B$.
 \end{itemize}
\end{theorem}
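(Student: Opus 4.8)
The plan is to reduce everything to the standard identification of $KK^1_\nuc(\mathfrak A,\mathfrak B)$ with strong unitary equivalence classes of nuclearly absorbing weakly nuclear extensions, and then, under the corona factorisation property, to recognise each of the three classes of extensions occurring in $(ii)$, $(iii)$ and $(iv)$ as precisely the nuclearly absorbing ones. First I would recall the facts, valid for $\mathfrak B$ separable and stable and any separable $\mathfrak A$ (see \cite{ElliottKucerovsky-extensions}): the assignment $\mathfrak e\mapsto[\mathfrak e]$ induces a group isomorphism, for the Cuntz sum, from the set of strong unitary equivalence classes of nuclearly absorbing weakly nuclear extensions onto $KK^1_\nuc(\mathfrak A,\mathfrak B)$; every class is represented by such an extension (Cuntz sum with an absorbing trivial extension, which exists since $\mathfrak B$ is $\sigma$-unital and stable); a trivial weakly nuclear extension represents $0$; a Cuntz sum of full extensions is full; and a nuclearly absorbing extension is automatically full, since it absorbs a full trivial extension. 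I would also record two facts for $\mathfrak B$ with the corona factorisation property: every norm-full projection in $\multialg{\mathfrak B}$ is properly infinite (this is the definition), and, by \cite{KucerovskyNg-corona}, every full weakly nuclear extension of a separable $C^*$-algebra by $\mathfrak B$ is purely large.

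For $(i)\Rightarrow(ii),(iii),(iv)$, assume $\mathfrak B$ has the corona factorisation property. I would prove that for a weakly nuclear extension $0\to\mathfrak B\to\mathfrak E\to\mathfrak A\to0$ the following are equivalent: $(a)$ it is nuclearly absorbing; $(b)$ it is full and absorbs the zero extension; $(c)$ it is full and there is a norm-full projection $P\in\multialg{\mathfrak B}$ with $P\mathfrak E\subset\mathfrak B$; $(d)$ it is unitisably full. Here $(a)\Leftrightarrow(d)$ is Theorem \ref{t:coronaunitfull}. For the rest: a nuclearly absorbing extension is full and absorbs the zero extension, and by Theorem \ref{t:unitalquot} it admits a norm-full properly infinite $P$ with $P\mathfrak E\subset\mathfrak B$, so $(a)$ implies $(b)$ and $(c)$; conversely, if the extension is full it is purely large by \cite{KucerovskyNg-corona}, any norm-full $P$ with $P\mathfrak E\subset\mathfrak B$ is automatically properly infinite by the corona factorisation property, and then Corollary \ref{c:abszero} (from $(b)$), respectively Theorem \ref{t:unitalquot} (from $(c)$), yields that the extension is nuclearly absorbing. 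Thus each of the three classes of weakly nuclear extensions described in $(ii)$, $(iii)$, $(iv)$ coincides with the class of nuclearly absorbing weakly nuclear extensions; in particular it is closed under Cuntz sums and represents every element of $KK^1_\nuc(\mathfrak A,\mathfrak B)$, so the three statements follow from the identification recalled in the first paragraph.

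For the converses I argue contrapositively: suppose $\mathfrak B$ does not have the corona factorisation property, and take $\mathfrak A=\mathbb C$. As in the proof of Proposition \ref{p:noncoronanonabs}, choose a norm-full projection $Q\in\multialg{\mathfrak B}$ which is not properly infinite and for which $P:=1-Q$ is norm-full and properly infinite, and let $\mathfrak e_Q$ be the trivial extension of $\mathbb C$ by $\mathfrak B$ with splitting $\sigma(1)=Q$. Then $\mathfrak e_Q$ is weakly nuclear (as $\mathbb C$ is nuclear), trivial (so $[\mathfrak e_Q]=0$), full, and unitisably full (since both $Q$ and $1-Q$ are full); moreover $1-Q$ is a norm-full projection with $(1-Q)\mathfrak E_{\mathfrak e_Q}\subset\mathfrak B$, so $\mathfrak e_Q$ lies in the classes of $(iii)$ and $(iv)$. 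However, by Proposition \ref{p:extofC}, $\mathfrak e_Q$ is not purely large, hence not nuclearly absorbing. Now let $\mathfrak a$ be the trivial extension with splitting $1\mapsto P_\infty$, where $P_\infty,1-P_\infty\in\multialg{\mathfrak B}$ are both norm-full and properly infinite: by Lemma \ref{l:quotcomplex} and Theorem \ref{t:unitalquot} it is nuclearly absorbing, so it is full, absorbs the zero extension, is unitisably full, has the norm-full projection $1-P_\infty$ with $(1-P_\infty)\mathfrak E_{\mathfrak a}\subset\mathfrak B$, and $[\mathfrak a]=0$. Thus $\mathfrak e_Q$ and $\mathfrak a$ both lie in the class of $(iii)$, respectively of $(iv)$, both have zero class in $KK^1_\nuc(\mathbb C,\mathfrak B)$, but they are not strongly unitarily equivalent (one is nuclearly absorbing and the other is not), so the natural map is not injective and $(iii)$, $(iv)$ fail. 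For $(ii)$ one runs the same argument with $\mathfrak e_Q\oplus0$ in place of $\mathfrak e_Q$: it is full, weakly nuclear, absorbs the zero extension, has splitting Murray--von Neumann equivalent to $Q$ and hence is not purely large by Proposition \ref{p:extofC}, so it lies in the class of $(ii)$, has zero $KK^1_\nuc$-class, and is not strongly unitarily equivalent to $\mathfrak a$.

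I expect the main obstacle to be the bookkeeping in the chain of equivalences $(a)$--$(d)$ in the first implication — correctly assembling Corollary \ref{c:abszero}, Theorem \ref{t:unitalquot}, Theorem \ref{t:coronaunitfull}, the Kucerovsky--Ng theorem, and the ``norm-full $\Leftrightarrow$ properly infinite'' consequence of the corona factorisation property — together with the routine but slightly delicate verifications (short computations with the Cuntz-sum isometries, and the fact that proper infiniteness is a Murray--von Neumann invariant) that the counterexample extensions $\mathfrak e_Q$, $\mathfrak a$ and $\mathfrak e_Q\oplus0$ genuinely satisfy the fullness and projection conditions defining the three classes.
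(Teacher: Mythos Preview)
Your proposal is correct and follows the same overall strategy as the paper: reduce to the standard identification of $KK^1_\nuc(\mathfrak A,\mathfrak B)$ with strong unitary equivalence classes of nuclearly absorbing weakly nuclear extensions, and then argue that, under the corona factorisation property, each of the three classes in $(ii)$, $(iii)$, $(iv)$ coincides with the nuclearly absorbing extensions.

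The organisation and some of the converse arguments differ slightly from the paper. The paper proves the cycle $(i)\Leftrightarrow(iv)$ and $(i)\Rightarrow(iii)\Rightarrow(ii)\Rightarrow(i)$, whereas you prove $(i)\Rightarrow\{(ii),(iii),(iv)\}$ simultaneously and then each converse by contraposition. For $\neg(i)\Rightarrow\neg(iv)$ the paper invokes Proposition~\ref{p:noncoronanonabs} (the extension with splitting $\sigma(1)=P$, which is purely large and unitisably full but not nuclearly absorbing); your choice of the complementary extension $\mathfrak e_Q$ with $\sigma(1)=Q$ is equally valid and arguably simpler, since it fails already to be purely large by Proposition~\ref{p:extofC}. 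For $(ii)\Rightarrow(i)$ the paper argues directly: given a full multiplier projection $P$, it applies $(ii)$ to the extension with Busby map $1\mapsto\pi(P\oplus 0)$, obtains absorption of the unitisation extension, and then runs an isometry computation to conclude $P\sim 1$. You instead prove the contrapositive via $\mathfrak e_Q\oplus 0$ and Proposition~\ref{p:extofC}; this is essentially the same idea packaged differently, since the isometry computation in the paper's argument is the same as the one already carried out in the proof of Proposition~\ref{p:extofC}. Both routes are fine; yours makes more efficient reuse of results already established in the paper.
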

\begin{proof}
 It is well-known that $KK^1_\nuc(\mathfrak A, \mathfrak B)$ is (isomorphic to) the group of strong unitary equivalence classes of weakly nuclear extensions of $\mathfrak A$ by $\mathfrak B$ which are nuclearly absorbing. Thus $(i)\Rightarrow (iv)$ by Theorem \ref{t:coronaunitfull} and
 $(iv) \Rightarrow (i)$ follows from Proposition \ref{p:noncoronanonabs}.
 
 If $\mathfrak B$ has the corona factorisation property, then any full extension by $\mathfrak B$ is purely large. Thus $(i) \Rightarrow (iii)$ follows from Theorem \ref{t:unitalquot}.
 
 $(iii)$ is clearly equivalent to the condition, that for any $C^\ast$-algebra $\mathfrak A$, any full, weakly nuclear extension $\mathfrak E$ of $\mathfrak A$ by $\mathfrak B$, for which there is a full projection $P\in \multialg{\mathfrak B}$ such that 
 $P\mathfrak E \subset \mathfrak B$, is nuclearly absorbing. If the extension $\mathfrak E$ has Busby map $\tau \oplus 0$, then $(0\oplus 1)\mathfrak E \subset \mathfrak B$, and thus $(iii) \Rightarrow (ii)$.
 
 It remains to show $(ii) \Rightarrow (i)$. Let $P\in \multialg{\mathfrak B}$ be a full projection, and $P \oplus 0$ be the Cuntz sum. Note that $Q \sim Q \oplus 0$ for any projection $Q$.
 By $(ii)$ the extension with the Busby map $\tau \colon \mathbb C \to \corona{\mathfrak B}$ given by $\tau(1) = \pi(P \oplus 0)$ is nuclearly absorbing. In particular, it absorbs the unitisation extension of $\mathfrak B$. 
 Consider the lift $\rho(1) = P \oplus 0$ of $\tau$ and the canonical lift of the unitisation extension of $\mathfrak B$. 
 We may find a unitary $u\in \multialg{\mathfrak B}$ such that $u^\ast (P \oplus 0 \oplus 0)u - 0 \oplus 0 \oplus 1 \in \mathfrak B$. If $v$ is an isometry such that $vv^\ast = 0 \oplus 0 \oplus 1$
 then $(uv)^\ast (P \oplus 0 \oplus 0) uv - 1 \in \mathfrak B$. Thus we may pick an isometry $w$ such that 
 \[
  \| w^\ast((uv)^\ast (P\oplus 0 \oplus 0) uv - 1)w\| = \| s^\ast (P \oplus 0 \oplus 0)s - 1\|  < 1,
 \]
 where $s$ is the isometry $uvw$. 
 Hence $P$ is Murray--von Neumann equivalent to $s^\ast(P \oplus 0 \oplus 0) s$, which is equivalent to $1$. 
\end{proof}

\begin{remark}
 It clearly follows from the proof above, that we could restrict our attention only to nuclear $C^\ast$-algebras $\mathfrak A$ if desired. In this case we can remove the weakly nuclear condition, since all extensions of a separable, nuclear $C^\ast$-algebra 
 are weakly nuclear by the lifting theorem of Choi and Effros \cite{ChoiEffros-lifting}, and also we would have $KK^1_\nuc(\mathfrak A, \mathfrak B) = KK^1(\mathfrak A , \mathfrak B)$.
\end{remark}

We still get some nice results for classification. This follows from the above theorem and Theorem \ref{t:nonunitalquot}.

\begin{corollary}\label{c:nonunitalclass}
 Let $\mathfrak B$ be a separable, stable $C^\ast$-algebra with the corona factorisation property and let $\mathfrak A$ be a non-unital, separable $C^\ast$-algebra. Then $KK^1_\nuc(\mathfrak A, \mathfrak B)$ is the group of strong unitary equivalence classes
 of all full, weakly nuclear extensions of $\mathfrak A$ by $\mathfrak B$.
\end{corollary}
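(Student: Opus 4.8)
The plan is to identify both sides with the set of strong unitary equivalence classes of nuclearly absorbing weakly nuclear extensions, using the two characterisations of nuclear absorption obtained above. More precisely, I would show that, under the stated hypotheses, the properties ``full'' and ``nuclearly absorbing'' single out exactly the same weakly nuclear extensions of $\mathfrak A$ by $\mathfrak B$, and then quote the standard $KK^1_\nuc$ picture.

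First I would recall the fact already invoked in the proof of Theorem \ref{t:KK1class}: for separable $\mathfrak A$ and separable stable $\mathfrak B$, the group $KK^1_\nuc(\mathfrak A,\mathfrak B)$ is (isomorphic to) the set of strong unitary equivalence classes of weakly nuclear extensions of $\mathfrak A$ by $\mathfrak B$ which are nuclearly absorbing, with Cuntz sum as group operation. So it suffices to prove that, when $\mathfrak A$ is non-unital separable and $\mathfrak B$ is separable, stable and has the corona factorisation property, a weakly nuclear extension of $\mathfrak A$ by $\mathfrak B$ is nuclearly absorbing if and only if it is full; the group operation on both sides is Cuntz sum, and fullness as well as weak nuclearity are preserved under Cuntz sums, so the induced bijection is a group isomorphism.

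For the direction ``nuclearly absorbing $\Rightarrow$ full'' I would not use non-unitality of $\mathfrak A$ at all: since $\mathfrak B$ has the corona factorisation property, Theorem \ref{t:coronaunitfull} gives that the extension is unitisably full, and a unitisably full extension is full, as noted right after Definition \ref{d:unitfull}. For the converse direction the remaining hypotheses come in: since $\mathfrak B$ is separable, stable and has the corona factorisation property, every full extension by $\mathfrak B$ is purely large by \cite{KucerovskyNg-corona}, and since $\mathfrak A$ is non-unital, Theorem \ref{t:nonunitalquot} upgrades ``purely large'' to ``nuclearly absorbing''. Combining the two directions, ``full'', ``unitisably full'' and ``nuclearly absorbing'' coincide for weakly nuclear extensions of $\mathfrak A$ by $\mathfrak B$, and substituting this equivalence into the $KK^1_\nuc$ description yields the corollary; in particular every class in $KK^1_\nuc(\mathfrak A,\mathfrak B)$ has a full weakly nuclear representative because it has a nuclearly absorbing one.

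I do not expect a genuinely hard step here. The only point that needs care is that non-unitality of $\mathfrak A$ is used exactly once, and is genuinely needed there: it is what allows the passage from ``purely large'' to ``nuclearly absorbing'' through Theorem \ref{t:nonunitalquot}, and for unital $\mathfrak A$ the corollary fails, since the full extension of Example \ref{e:counterexm} is not nuclearly absorbing. One should also keep track that weak nuclearity is merely carried along: each implication above stays within the class of weakly nuclear extensions, so no separate argument about the splittings is required.
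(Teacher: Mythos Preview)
Your proposal is correct and is essentially the same approach as the paper's. The paper simply states that the corollary follows from Theorem~\ref{t:KK1class} and Theorem~\ref{t:nonunitalquot}; your argument is a faithful unpacking of this, invoking the $KK^1_\nuc$ picture via nuclearly absorbing extensions, Theorem~\ref{t:coronaunitfull} for one implication, and \cite{KucerovskyNg-corona} together with Theorem~\ref{t:nonunitalquot} for the other.
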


\begin{corollary}\label{c:stableclass}
 Let $\mathfrak B$ be a separable, stable $C^\ast$-algebra with the corona factorisation property and let $\mathfrak A$ be a separable $C^\ast$-algebra. Let $\mathfrak E_i$ be full, weakly nuclear extensions of $\mathfrak A$ by $\mathfrak B$, 
 with Busby maps $\tau_i$, for $i=1,2$. If $[\tau_1] = [\tau_2] \in KK^1_\nuc(\mathfrak A, \mathfrak B)$, then $\mathfrak E_1 \otimes \mathbb K \cong \mathfrak E_2 \otimes \mathbb K$.
\end{corollary}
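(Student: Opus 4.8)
The plan is to stabilise the whole picture so that the quotient becomes non-unital, and then apply Corollary~\ref{c:nonunitalclass}. We may assume $\mathfrak A\neq 0$. Set $\mathfrak A':=\mathfrak A\otimes\mathbb K$ and $\mathfrak B':=\mathfrak B\otimes\mathbb K$. Then $\mathfrak A'$ is separable and non-unital, since a nonzero stable $C^\ast$-algebra is never unital, while $\mathfrak B'\cong\mathfrak B$ is separable, stable, and still has the corona factorisation property. Tensoring the short exact sequence $0\to\mathfrak B\to\mathfrak E_i\to\mathfrak A\to 0$ with the nuclear algebra $\mathbb K$ yields a short exact sequence $0\to\mathfrak B'\to\mathfrak E_i\otimes\mathbb K\to\mathfrak A'\to 0$; write $\widehat\tau_i\colon\mathfrak A'\to\corona{\mathfrak B'}$ for its Busby map. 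Since the extension algebra $\mathfrak E_i\otimes\mathbb K$ is recovered as the pullback of $\widehat\tau_i$ along $\pi\colon\multialg{\mathfrak B'}\to\corona{\mathfrak B'}$, it suffices to produce a unitary $u\in\multialg{\mathfrak B'}$ with $\pi(u)\,\widehat\tau_1(\cdot)\,\pi(u)^\ast=\widehat\tau_2(\cdot)$; conjugation by $u$ then restricts to a $\ast$-isomorphism $\mathfrak E_1\otimes\mathbb K\cong\mathfrak E_2\otimes\mathbb K$.

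Next I would check that each $\mathfrak E_i\otimes\mathbb K$ is again a full, weakly nuclear extension. For weak nuclearity, if $\sigma_i\colon\mathfrak A\to\mathfrak E_i$ is a weakly nuclear completely positive splitting of the original extension, then $\sigma_i\otimes\mathrm{id}_{\mathbb K}$ is a completely positive splitting of $\mathfrak E_i\otimes\mathbb K\to\mathfrak A'$, and it is weakly nuclear because $\mathbb K$ is nuclear and tensoring a nuclear map with $\mathrm{id}_{\mathbb K}$ stays nuclear. For fullness, under the canonical identification $\widehat\tau_i$ is the composite $\mathfrak A'\xrightarrow{\tau_i\otimes\mathrm{id}}\corona{\mathfrak B}\otimes\mathbb K\hookrightarrow\corona{\mathfrak B'}$, so given $0\neq a'\in\mathfrak A'$ one reduces, using that $\mathbb K$ is simple and $\widehat\tau_i$ a $\ast$-homomorphism, to seeing that $\tau_i(a)\otimes e_{11}$ is full in $\corona{\mathfrak B'}$ for some $0\neq a\in\mathfrak A$; but the closed ideal it generates already contains $\corona{\mathfrak B}\otimes e_{11}$, hence the projection $\pi(1_{\multialg{\mathfrak B}}\otimes e_{11})$, which is full in $\corona{\mathfrak B'}$ because $1_{\multialg{\mathfrak B}}\otimes e_{11}$ is a full projection in $\multialg{\mathfrak B'}$. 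Thus $\mathfrak E_1\otimes\mathbb K$ and $\mathfrak E_2\otimes\mathbb K$ are full, weakly nuclear extensions of the non-unital algebra $\mathfrak A'$ by $\mathfrak B'$.

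Then I would transport the $KK$-theoretic hypothesis. By the description of the group structure on $KK^1_\nuc(\mathfrak A,\mathfrak B)$ in the extension picture, $[\tau_1]=[\tau_2]$ means that there is a trivial, weakly nuclear extension of $\mathfrak A$ by $\mathfrak B$, with Busby map $\gamma$, such that the Cuntz sums $\tau_1\oplus\gamma$ and $\tau_2\oplus\gamma$ are strongly unitarily equivalent. Tensoring this strong unitary equivalence with $\mathbb K$, and using that Cuntz sums and triviality are preserved, one gets that $\widehat\tau_1\oplus\widehat\gamma$ and $\widehat\tau_2\oplus\widehat\gamma$ are strongly unitarily equivalent with $\widehat\gamma$ trivial and weakly nuclear; hence $[\widehat\tau_1]=[\widehat\tau_2]$ in $KK^1_\nuc(\mathfrak A',\mathfrak B')$.

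Finally, Corollary~\ref{c:nonunitalclass} applies verbatim to the separable, non-unital algebra $\mathfrak A'$ and the separable, stable algebra $\mathfrak B'$ with the corona factorisation property: $KK^1_\nuc(\mathfrak A',\mathfrak B')$ is exactly the group of strong unitary equivalence classes of full, weakly nuclear extensions of $\mathfrak A'$ by $\mathfrak B'$. Since $\mathfrak E_1\otimes\mathbb K$ and $\mathfrak E_2\otimes\mathbb K$ are such extensions with the same class $[\widehat\tau_1]=[\widehat\tau_2]$, their Busby maps are strongly unitarily equivalent, and the first paragraph gives $\mathfrak E_1\otimes\mathbb K\cong\mathfrak E_2\otimes\mathbb K$. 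The only non-formal work is in the second and third paragraphs — pinning down $\widehat\tau_i$ inside $\corona{\mathfrak B\otimes\mathbb K}$ and checking that fullness, weak nuclearity, and strong unitary equivalence all survive tensoring with $\mathbb K$; granting these, the corollary is a direct consequence of Corollary~\ref{c:nonunitalclass} (equivalently, of Theorems~\ref{t:nonunitalquot} and~\ref{t:KK1class}).
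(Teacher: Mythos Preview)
Your proposal is correct and follows essentially the same route as the paper: stabilise so that the quotient $\mathfrak A\otimes\mathbb K$ is non-unital, identify the Busby map of the stabilised extension with the composite $\tau_i\otimes\mathrm{id}$ followed by the inclusion $\corona{\mathfrak B}\otimes\mathbb K\hookrightarrow\corona{\mathfrak B\otimes\mathbb K}$, and then invoke Corollary~\ref{c:nonunitalclass}. The paper compresses your second and third paragraphs into the single remark that $[\tau]\mapsto[\tau^s]$ is a well-known isomorphism $KK^1_\nuc(\mathfrak A,\mathfrak B)\to KK^1_\nuc(\mathfrak A\otimes\mathbb K,\mathfrak B\otimes\mathbb K)$, whereas you unpack the transport of the $KK$-class and the preservation of fullness and weak nuclearity explicitly; but the argument is the same.
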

\begin{proof}
 Given a Busby map $\tau \colon \mathfrak A \to \corona{\mathfrak B}$, let $\tau^s$ be the composition 
 \[
 \mathfrak A \otimes \mathbb K \xrightarrow{\tau \otimes id} \corona{\mathfrak B} \otimes \mathbb K \hookrightarrow \corona{\mathfrak B \otimes \mathbb K}.
 \]
 It is well-known that the map $KK^1_\nuc(\mathfrak A, \mathfrak B) \to KK^1_\nuc(\mathfrak A\otimes \mathbb K, \mathfrak B \otimes \mathbb K)$ given by $[\tau] \mapsto [\tau^s]$, is an isomorphism 
 (the proof is identical to the similar result in classical $KK$-theory). Thus $\tau_1^s$ and $\tau_2^s$ are strongly unitarily equivalent 
 by Corollary \ref{c:nonunitalclass}, and since their corresponding extension algebras are $\mathfrak E_1 \otimes \mathbb K$ and $\mathfrak E_2 \otimes \mathbb K$ respectively, it follows that $\mathfrak E_1 \otimes \mathbb K \cong \mathfrak E_2 \otimes \mathbb K$.
\end{proof}

\begin{remark}
Every result in this note holds with the ideal $\mathfrak B$ being $\sigma$-unital instead of separable. The quotient $\mathfrak A$ should still be separable. 
This is a special case of a much more general result, proved by the author in collaboration with Efren Ruiz in \cite{GabeRuiz-absrep}.
\end{remark}

\section{The counter example of Ruiz}

\begin{definition}
 Let $E=(E^0,E^1,r,s)$ be a (countable, directed) graph. The \emph{graph $C^\ast$-algebra} $C^\ast(E)$ is the universal $C^\ast$-algebra generated by mutually orthogonal projections $p_v$ for $v\in E^0$, and isometries $s_e$ for $e\in E^1$,
 which satisfy the relations
 \begin{itemize}
  \item $s_e^\ast s_f = \delta_{ef} p_{r(e)}$ for all $e,f\in E^1$,
  \item $s_e s_e^\ast \leq p_{s(e)}$ for all $e\in E^1$,
  \item $p_v = \sum_{e\in s^{-1}(\{ v\})} s_e s_e^\ast$ for all $v\in E^0$ satisfying $0 < | s^{-1}(\{ v\} ) | < \infty$.
 \end{itemize}
\end{definition}

\begin{example}[Counter example to Theorem 4.9 of \cite{EilersRestorffRuiz-K-theoryfullext}]
 \cite[Theorem 4.9]{EilersRestorffRuiz-K-theoryfullext} states that if $C^\ast(E)$ and $C^\ast(F)$ are non-unital and both have exactly one non-trivial, two-sided, closed ideal, and the induced six-term exact sequence in $K$-theory are isomorphic, such that
 the isomorphisms on all $K_0$-groups preserve order and scale, then $C^\ast(E) \cong C^\ast(F)$. We will provide a counter example to this result.

 Let $E$ and $F$ be the respective graphs
 \[
  \xymatrix{
  v \ar@(rd,ru) \ar@(ld,lu) \ar[d] &&&&&& w \ar@(rd,ru) \ar@(ld,lu) \ar[d]\\
  v_0 \ar[r]^2 & v_1 \ar[r]^2 & v_2 \ar[r]^2 & \dots& \dots \ar[r] &  w_{-1} \ar[r] &  w_{0} \ar[r]^2 & w_1 \ar[r]^2 & w_2 \ar[r]^2 & \dots
  }
 \]
 Both $C^\ast(E)$ and $C^\ast(F)$ are non-unital, full extensions of the Cuntz algebra $\mathcal O_2$ by the stabilisation of the CAR algebra $M_{2^\infty} \otimes \mathbb K$.
 The six-term exact sequences of the induced extensions, where we write the $K_0$-groups with order and scale as $(K_0(\mathfrak A), K_0(\mathfrak A)^+, \Sigma K_0(\mathfrak A))$, are both isomorphic to
 \[
  \xymatrix{
  (\mathbb Z[\tfrac{1}{2}], \mathbb Z[\tfrac{1}{2}]_+,\mathbb Z[\tfrac{1}{2}]_+)  \ar[r]^{\; \; (id,\iota,\iota)} & (\mathbb Z[\tfrac{1}{2}],\mathbb Z[\tfrac{1}{2}],\mathbb Z[\tfrac{1}{2}]) \ar[r] & (0,0,0) \ar[d] \\
  \ar[u] 0 & 0 \ar[l] & 0. \ar[l]
  }
 \]
 where $\mathbb Z[\tfrac{1}{2}]_+ = \mathbb Z[\tfrac{1}{2}] \cap [0,\infty)$ and $\iota \colon \mathbb Z[\tfrac{1}{2}]_+ \hookrightarrow \mathbb Z[\tfrac{1}{2}]$ is the canonical inclusion.
 To compute the order and scale of $K_0(C^\ast(E))$ and $K_0(C^\ast(F))$ we simply used that both $C^\ast(E)$ and $C^\ast(F)$ contain a full, properly infinite projection, $p_v$ and $p_w$ respectively, and applied \cite[Proposition 4.1.4]{Rordam-book-class}.
 Thus if \cite[Theorem 4.9]{EilersRestorffRuiz-K-theoryfullext} were true, it should follow that $C^\ast(E) \cong C^\ast(F)$. We will show that this is not the case, by showing that one extension with $C^\ast(F)$ is nuclearly absorbing, but that the
 extension with $C^\ast(E)$ is not.
 
 \textbf{The extension with $C^\ast(F)$ is nuclearly absorbing:}
 Recall, that $F^\ast$ denotes the set of paths in $F$, and that if $\alpha = e_1 \dots e_n\in F^\ast$ then $s_\alpha := s_{e_1} \dots s_{e_n}$, and that $r(\alpha) = r(e_n)$ and $s(\alpha) = s(e_1)$.
 The ideal $\mathfrak I_F$ in $C^\ast(F)$ isomorphic to $M_{2^\infty} \otimes \mathbb K$ is given by
 \[
  \mathfrak I_F = \overline{\mathrm{span}} \{ s_\alpha s_\beta^\ast : \alpha, \beta\in F^\ast, r(\alpha) = r(\beta) = w_n \text{ for some }n \in \mathbb Z \}.
 \]
 Let $P = \sum_{n=1}^\infty w_{-n}$ which is easily seen to converge strictly in the multiplier algebra of $\mathfrak I_F$.
 We clearly have that $P C^\ast(F) \subset \mathfrak I_F$. Thus, if $P$ is a full, properly infinte projection in $\multialg{\mathfrak I_F}$, then it follows from Theorem \ref{t:unitalquot} that the extension with $C^\ast(F)$ is nuclearly absorbing.
 Since $\mathfrak I_F$ has the corona factorisation property it suffices to show that $P$ is full.
 
 Note that $M_{2^\infty} \cong p_{w_0} \mathfrak I_F p_{w_0}$. Let $\rho$ denote the unique tracial state on $p_{w_0} \mathfrak I_F p_{w_0}$, and $\rho_\infty$ denote the induced trace function on $\multialg{\mathfrak I_F}_+$.
 It follows from \cite[Theorem 4.4]{Rordam-multialg} that $P$ is full if and only if $\rho_\infty(P) = \infty$. 
 Since $p_{w_{-n}}$ for $n>0$ is Murray--von Neumann equivalent to $p_{v_0}$, it follows that $\rho(p_{w_{-n}}) = \rho(p_{v_0}) = 1$ and thus $\rho_\infty (P) = \sum_{n=1}^\infty \rho(p_{w_{-n}}) = \infty$.
 Thus the extension $0 \to \mathfrak I_F \to C^\ast(F) \to C^\ast(F)/\mathfrak I_F \to 0$ is nuclearly absorbing.
 
 \textbf{The extension with $C^\ast(E)$ is not nuclearly absorbing:} 
 The ideal $\mathfrak I_E$ in $C^\ast(E)$ isomorphic to $M_{2^\infty} \otimes \mathbb K$ is given by
 \[
  \mathfrak I_E = \overline{\mathrm{span}} \{ s_\alpha s_\beta^\ast : \alpha, \beta\in E^\ast, r(\alpha) = r(\beta) = v_n \text{ for some }n \geq 0 \}.
 \]
 To show that the extension $\mathfrak e \colon 0 \to \mathfrak I_E \to C^\ast(E) \to \mathcal O_2 \to 0$ is not nuclearly absorbing, it suffices to show that the unitised extension
 $\mathfrak e^\dagger \colon 0 \to \mathfrak I_E \to C^\ast(E)^\dagger \to \mathcal O_2 \oplus \mathbb C \to 0$ is \emph{not} full.
 Let $\sigma \colon C^\ast(E) \to \multialg{\mathfrak I_E}$ be the canonical $\ast$-homomorphism. Then $C^\ast(E)^\dagger \cong \sigma(C^\ast(E)) + 1_{\multialg{\mathfrak I_E}}$.
 Note that $1-\sigma(p_{v})$ is a lift of $(0,1) \in \mathcal O_2 \oplus \mathbb C$ (under the obvious identifications),
 so if $\mathfrak e^\dagger$ is full, we should have that $1-\sigma(p_{v}) + \mathfrak I_E$ is full in $\corona{\mathfrak I_E}$.
 Since $\mathfrak I_E$ is stable, fullness of $1-\sigma(p_{v}) + \mathfrak I_E$ is equivalent to fullness of $1- \sigma(p_v)$ in $\multialg{\mathfrak I_E}$.
 
 The corner in $\mathfrak I_E$ generated by $1-\sigma(p_v)$ is easily seen to be
 \[
  \overline{\mathrm{span}} \{ s_\alpha s_\beta^\ast : s(\alpha) \neq v \neq s(\beta)\},
 \]
 which has an approximate unit $(\sum_{n=0}^k p_{v_n})_{k=1}^\infty$. Thus $1 - \sigma(p_v) = \sum_{n=0}^\infty p_{v_n}$.
 As above, $M_{2^\infty} \cong p_{v_0} \mathfrak I_E p_{v_0}$, so let $\rho$ be the unique tracial state and $\rho_\infty$ be the induced trace function on $\multialg{\mathfrak I_E}_+$.
 We have that $\rho(p_{v_n}) = 2^{-n}$ so $\rho_\infty (1-\sigma(p_v)) = \sum_{n=0}^\infty 2^{-n} < \infty$. It follows that $1-\sigma(p_v)$ is not full, and thus $\mathfrak e$ is not nuclearly absorbing.
 
 In particular, $C^\ast(E) \not \cong C^\ast(F)$.
\end{example}

\subsection*{Acknowledgement} I am indebted to Efren Ruiz for the above counter example, as well as for other helpful comments.

\end{document}